\newtheorem{theorem}{Theorem}
\newtheorem{proposition}[theorem]{Proposition}
\newtheorem{lemma}[theorem]{Lemma}
\theoremstyle{definition}
\newtheorem{def-theorem}[theorem]{Definition-Theorem}
\newtheorem{remark}[theorem]{Remark}
\newtheorem{question}[theorem]{Question}
\newcommand{\be}{\begin{equation}}
\newcommand{\ee}{\end{equation}}
\newcommand{\bea}{\begin{eqnarray}}
\newcommand{\eea}{\end{eqnarray}}
\newcommand{\beas}{\begin{eqnarray*}}
\newcommand{\eeas}{\end{eqnarray*}}
\newcommand{\ba}{\begin{array}}
\newcommand{\ea}{\end{array}}
\newcommand{\bbG} {\mathbb{G}}		% natural numbers
\newcommand{\bbZ} {\mathbb{Z}}		% integers
\newcommand{\bbP} {\mathbb{P}}
\newcommand{\bbA} {\mathbb{A}}
\newcommand\INTO{\ar@{^{(}->}[r]}
\newcommand{\PGL}{\operatorname{PGL}}
\newcommand{\id}{\operatorname{id}}
\newcommand{\Sym}{\operatorname{\Sigma}}
\newcommand{\trdeg}{\operatorname{trdeg}}
\newcommand{\Spec}{\operatorname{Spec}}
\newcommand{\Char}{\operatorname{char}}
\newcommand{\presectionspace}{\vspace{0.2cm}} % ***** set this to zero
\begin{document}

\author{Tran-Trung Nghiem} 
\address{Département de Mathématiques et Applications\\Ecole Normale Supérieure\\Paris, France}
\email{tran-trung.nghiem@ens.fr}
\thanks{Tran-Trung Nghiem was supported by a Fondation Hadamard Scholarship.}

\author{Zinovy Reichstein}
\address{Department of Mathematics\\University of British Columbia\\ BC, Canada V6T 1Z2}
\email{reichst@math.ubc.ca}
\thanks{Zinovy Reichstein was partially supported by
National Sciences and Engineering Research Council of
Canada Discovery grant 253424-2017.}

\title[Fields of cross-ratios]{On a rationality problem 
 for fields of cross-ratios II}

\keywords{Rationality problem, field extension, cross ratio, conic curve}

\subjclass[2020]{14E08, 14M17}
%%%%%%%%%%%%%%%%%%%%%%%%%%%%%%%%%%
% 12  View Publications (1959-now) Field theory and polynomials
% 12G  View Publications (1973-now) Homological methods (field theory)
% 12G05  View Publications (1973-now) Galois cohomology [See also 14F22, 16Hxx, 16K50]
% 12G10  View Publications (1973-now) Cohomological dimension
% 12G99  View Publications (1973-now) None of the above, but in this section
% 14E08 (2000-now) Rationality questions
% 14H View Publications (1973-now) Curves
% 14H10 View Publications (1973-now) Families, moduli (algebraic)
% 14M17  	Homogeneous spaces and generalizations 
% 20G15 (1973-now) Linear algebraic groups over arbitrary fields
% 16  View Publications (1959-now) Associative rings and algebras [For the commutative case, see 13-XX]
% 16H  View Publications (1991-now) Algebras and orders [For arithmetic aspects, see 11R52, 11R54, 11S45; for representation theory, see 16G30]
% 16H05  View Publications (1991-now) Separable algebras (e.g., quaternion algebras, Azumaya algebras, etc.)
% 16K (1991-now) Division rings and semisimple Artin rings
% 16K50 (2000-now) Brauer groups
%%%%%%%%%%%%%%%%%%%%%%%%%%%

\begin{abstract}
 Let $k$ be a field,  $x_1, \dots, x_n$ be independent variables and
$L_n = k(x_1, \dots, x_n)$. The symmetric group $\Sym_n$ acts on $L_n$ by permuting the variables, and
the projective linear group $\PGL_2$ acts by 
\[ \begin{pmatrix} a & b \\ c & d \end{pmatrix} \colon x_i \mapsto \frac{a x_i + b}{c x_i + d} \]
for each $i = 1, \ldots, n$. The fixed field $L_n^{\PGL_2}$ is called ``the field of cross-ratios".
Given a subgroup $S \subset \Sym_n$, H.~Tsunogai asked whether $L_n^S$ rational over $K_n^S$.  
When $n \geqslant 5$ the second author has shown that $L_n^S$ is rational over $K_n^S$ if and only if
$S$ has an orbit of odd order in $\{ 1, \dots, n \}$. In this paper we answer Tsunogai's question for $n \leqslant 4$.
\end{abstract}

\maketitle

\presectionspace
\section{Introduction}

Let $k$ be a base field, $n \geqslant 1$ be an integer, 
$x_1, \dots, x_n$ be independent variables, and
$L_n = k(x_1, \dots, x_n)$. The group $\PGL_2$ acts on $L_n$ 
via
\[ \begin{pmatrix} a & b \\ c & d \end{pmatrix} \cdot x_i \to 
\frac{a x_i + b}{c x_i + d} \]
for $i = 1, \dots, n$. The field of invariants 
$K_n = L_n^{\PGL_2}$ is 
generated over $k$ by the cross-ratios 
\begin{equation} \label{e.cr}
\frac{(x_i - x_1)(x_3 - x_2)}{(x_i - x_2)(x_3 - x_1)} 
 \end{equation}
for $i = 4, \ldots, n$. For this reason $K_n$ is often called the field of cross-ratios. (If $n \leqslant 3$, then $K_n = k$.) 
The natural action of the symmetric group $\Sym_n$ on $L_n$ 
by permuting the variables descends to a $\Sym_n$-action on $K_n$. Let $S$ be a subgroup of $\Sym_n$.
Motivated by the Noether problem, H. Tsunogai asked the following question~\cite[Introduction]{tsunogai}.

\begin{question} \label{q.main} 
Is $L_n^S$ is rational over $K_n^S$?
\end{question}

For $n \geqslant 5$ the second author answered Question 1 as follows; see~\cite{reichstein}.

\begin{theorem} \label{thm.old} 
Let $S$ be a subgroup of the symmetric group $\Sym_n$, where $n \geqslant 5$.
Then the following conditions are equivalent:

\begin{enumerate}
    \item[\rm{(a)}] $L_n^S$ is rational over $K_n^S$,
     \item[\rm{(a)}] $L_n^S$ is unirational over $K_n^S$,
     \item[\rm{(c)}] $S$ has an orbit of odd order in $\{ 1, \dots, n \}$.
\end{enumerate}
\end{theorem}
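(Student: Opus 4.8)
The implication (a)$\Rightarrow$(b) is immediate, so the plan is to show (a)$\Leftrightarrow$(b)$\Leftrightarrow$[a certain conic $C_S$ over $K_n^S$ splits], and then that $C_S$ splits precisely when (c) holds. First I would set up the conic. For $n\geqslant 3$ the group $\PGL_2$ acts generically freely on $(\bbP^1)^n$, commuting with the $\Sym_n$-action permuting the factors; hence it acts generically freely on $(\bbP^1)^n/S$ with invariant field $K_n^S$, so that $\Spec L_n^S\to\Spec K_n^S$ is the generic fibre of a $\PGL_2$-torsor $T$. Twisting the $\PGL_2$-variety $\bbP^1$ by $T$ produces a conic $C_S$ over $K_n^S$; explicitly $C_S=\mathcal C_n/S$, where $\mathcal C_n=(\bbP^1)^{n+1}/\PGL_2$ is the universal conic over $M_{0,n}$ (all quotients taken birationally). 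This $\mathcal C_n$ has a section, hence is $K_n$-isomorphic to $\bbP^1_{K_n}$, but it carries a nontrivial semilinear $\Sym_n$-action permuting its $n$ tautological sections $\xi_1,\dots,\xi_n\in\bbP^1(K_n)$.

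The chain (a)$\Leftrightarrow$(b)$\Leftrightarrow$($C_S$ splits) is then formal. If $C_S$ splits, then $T$ is the trivial $\PGL_2$-torsor, so $L_n^S$ is $K_n^S$-isomorphic to $K_n^S(\PGL_2)$, which is rational over $K_n^S$ since $\PGL_2$ is a rational variety; this gives (a). Conversely, if (b) holds then $L_n^S$ embeds into some $K_n^S(t_1,\dots,t_N)$ over $K_n^S$; since any $\PGL_2$-torsor splits over the function field of its total space, $C_S$ splits over $L_n^S$, hence over $K_n^S(t_1,\dots,t_N)$, hence over $K_n^S$ because $\Br(K_n^S)\into\Br(K_n^S(t_1,\dots,t_N))$.

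The easy half of the remaining equivalence is (c)$\Rightarrow$($C_S$ splits). Let $O$ be an $S$-orbit of odd size $m$. The divisor $D_O:=\sum_{i\in O}(\xi_i)$ on $\mathcal C_n\cong\bbP^1_{K_n}$ has degree $m$ and is stable under the semilinear $S$-action, because $S$ merely permutes the set $\{\xi_i:i\in O\}$. Hence $D_O$ descends to an effective divisor of odd degree $m$ on $C_S$; its support contains a closed point of odd degree, and a conic with a closed point of odd degree is split. Together with the previous paragraph this proves (a).

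The hard half is $\lnot$(c)$\Rightarrow$($C_S$ does not split). Assume every $S$-orbit has even size; one must show $C_S$ is nonsplit, equivalently that $\bbP^1_{K_n}=\mathcal C_n$ admits no $S$-stable effective divisor of odd degree (such a divisor would descend and split $C_S$, and conversely a rational point of a split $C_S$ pulls back to an $S$-stable rational point of $\bbP^1_{K_n}$). The plan is a residue computation of $[C_S]\in\Br(K_n^S)$: choose a discrete valuation of $K_n^S$ arising from a boundary divisor of a compactification of $M_{0,n}/S$ — for instance one forcing the points of a single orbit to collide in pairs — and verify that the residue $\partial_v[C_S]\in\kappa(v)^\times/(\kappa(v)^\times)^2$ is nontrivial, so that $[C_S]\neq 0$. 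One expects this residue to be, up to squares, a product of differences of moduli coordinates, and its nontriviality should reduce to the nonsplitness, over a purely transcendental extension of $k$, of a diagonal ternary quadratic form with two algebraically independent coefficients. The main obstacle is the bookkeeping that rules out, when all orbits are even, any ``accidental'' $S$-stable effective divisor of odd degree on $\bbP^1_{K_n}$ supported on a proper subvariety; this should follow from the algebraic independence of $\xi_1,\dots,\xi_n$ over $k$ modulo the elementary symmetric functions of the $S$-orbits. Granting this, $C_S$ is nonsplit, (a) fails, and the cycle of implications is complete.
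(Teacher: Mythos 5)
Your overall strategy is sound and is essentially the one used in the cited reference \cite{reichstein}, as summarized in the Introduction of this paper: identify $\Spec L_n^S \to \Spec K_n^S$ with a $\PGL_2$-torsor, twist $\bbP^1$ to get a conic $C_S$ over $K_n^S$, prove (a) $\Leftrightarrow$ (b) $\Leftrightarrow$ ($C_S$ splits) using the injectivity of $\Br(K_n^S) \to \Br(K_n^S(t_1, \dots, t_N))$, and get (c) $\Rightarrow$ (split) from an odd-degree $S$-stable zero-cycle together with Springer's theorem for conics. Two corrections to the setup, one minor and one not. The minor one: the inference ``$\PGL_2$ acts generically freely on $(\bbP^1)^n$, hence on $(\bbP^1)^n/S$'' is not valid as stated; generic freeness does not descend along quotients by a commuting finite group, and in fact this exact implication \emph{fails} for $n=4$ when $S[4] \neq 1$ --- that failure is the raison d'\^etre of the present paper. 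For $n \geqslant 5$ the conclusion is nevertheless true, but the correct justification is that the $\PGL_2$-stabilizer of an \emph{unordered} $n$-tuple of points of $\bbP^1$ in general position is trivial (compare the proof of Lemma~\ref{lem.gen-free}). You also implicitly use that $K_n/K_n^S$ is Galois with group $S$ (i.e., that $S$ acts faithfully on $K_n$) when descending the divisor $D_O$; this holds for $n \geqslant 5$ but deserves a sentence.

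The serious problem is the direction $\lnot$(c) $\Rightarrow$ ($C_S$ nonsplit), which is where all the content of the theorem lies and which you do not actually prove. You propose a residue computation at an unspecified boundary valuation of $K_n^S$ and then write ``one expects this residue to be\dots'', ``should follow from\dots'', ``granting this\dots''. Neither the valuation, nor the residue, nor its nontriviality is pinned down, so the implication (b) $\Rightarrow$ (c) is missing and the cycle of equivalences does not close. (Incidentally, your two reformulations --- nonvanishing of $[C_S]$ in $\Br(K_n^S)$ versus nonexistence of an $S$-stable effective divisor of odd degree on $\bbP^1_{K_n}$ --- are indeed equivalent, but the residue strategy targets the former directly, so the ``bookkeeping that rules out accidental $S$-stable divisors'' is not a separate obstacle: once a single nonzero residue is exhibited, no such divisor can exist. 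The work you have deferred is exhibiting that residue, e.g., by degenerating one even orbit to points colliding in pairs and computing the resulting quaternion symbol over the residue field; this is the part of the argument that must be supplied before the proof can be considered complete.)
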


The purpose of this paper is to address Question~\ref{q.main} in the case where $n \leqslant 4$.
For $n \leqslant 3$, there is an easy answer. 
Here, as we mentioned above, $K_n = k$ and thus $K_n^S = k$ for any $S \subset \Sym_n$. 
In other words, for $n \leqslant 3$, Question~\ref{q.main} reduces to the following 
special case of the Noether Problem: Is $L_n^S$ rational over $k$? 
The answer is known to be ``yes" for every subgroup $S \subset \Sym_n$ ($n \leqslant 3$); see,~\cite[Theorem 3.3]{kang-wang}.

The case, where $n = 4$ is more delicate. Our main result is as follows.

\begin{theorem} \label{thm.main} Let $S$ be a subgroup of $\Sym_4$.

\smallskip
(a) Assume $S$ is not isomorphic to a cyclic group of order $4$.
Then $L_4^S$ is rational over $K_4^S$ for any base field $k$.

\smallskip
(b) Assume $S$ is cyclic of order $4$ and $\Char(k) \neq 2$. Then $L_4^S$ is rational over $K_4^S$ if and only if $k$ contains a primitive $4$th root of unity.

\smallskip
(c) Assume $S$ is cyclic of order $4$ and $\Char(k) = 2$. Then $L_4^S$ is rational over $K_4^S$.
\end{theorem}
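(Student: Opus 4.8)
The plan begins with the standard reductions. Up to conjugacy $\Sym_4$ has eleven subgroups $S$, and for each $K_4^S$ is rational over $k$: writing $\lambda$ for a cross-ratio, the Klein four-group $V=\{e,(12)(34),(13)(24),(14)(23)\}$ acts trivially on $K_4=k(\lambda)$, so $K_4^S=k(\lambda)^{\bar S}$, where $\bar S$ is the image of $S$ in $\Sym_4/V\cong\Sym_3\subset\PGL_2(k)$, and $k(\lambda)^{\bar S}$ is the function field of a quotient of $\bbP^1$ by a finite group of automorphisms, hence rational. So Theorem~\ref{thm.main} amounts to deciding, for each $S$, whether $L_4^S$ is rational over $K_4^S$. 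I set $N:=S\cap V=\ker(S\to\bar S)$, which is trivial, cyclic of order $2$, or $V$ itself.

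The next step is to exploit the $\PGL_2$-structure. Since $L_4$ is purely transcendental over $K_4$ — writing a generic quadruple as $(h(0),h(\infty),h(1),h(\lambda))$ with $h\in\PGL_2$ identifies $L_4$ with $K_4(\PGL_2)$, left translation giving the $\PGL_2$-action — and the $\PGL_2$- and $\Sym_4$-actions commute, $\PGL_2$ acts on $L_4^S$ with $(L_4^S)^{\PGL_2}=K_4^S$. A generic quadruple is carried by $\sigma\in S$ into its own $\PGL_2$-orbit exactly when $\sigma\in N$, so the generic stabilizer of the $\PGL_2$-action on a model of $L_4^S$ is a form $\Gamma$ of $N$, realized by $\sigma\mapsto g_\sigma$, the unique Möbius transformation with $g_\sigma(x_i)=x_{\sigma(i)}$. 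Thus $L_4^S$ is the function field over $K_4^S$ of a form of the homogeneous space $\PGL_2/\Gamma$. When $N=1$ this is a $\PGL_2$-torsor, which is trivial — and then $L_4^S\cong K_4^S(\PGL_2)$ is rational over $K_4^S$ — precisely when its associated conic has a $K_4^S$-point.

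For the four subgroups with $N=1$, namely $\{e\}$, $\langle(12)\rangle$, $\langle(123)\rangle$ and $\Sym_3$, each fixes some index $i$, so $x_i\in L_4^S$ is a $\PGL_2$-equivariant rational point of $\bbP^1$; pushing it to the associated conic gives a $K_4^S$-point, so $L_4^S$ is rational over $K_4^S$. For $N=V$ (the cases $V$, $D_4$, $A_4$, $\Sym_4$) and for the two $N=C_2$ subgroups $\langle(12)(34)\rangle$, $\langle(12),(34)\rangle$ — for which the extension $1\to N\to S\to\bar S\to1$ \emph{splits} — I would compute $\Gamma$ explicitly: the fixed points of the involutions $g_\sigma$, written through the cross-ratio, lie over quadratic extensions of $K_4^S$ cut out by classes such as $\lambda$, $1-\lambda$, $\lambda(1-\lambda)$ (for instance $g_{(12)(34)}$ is $z\mapsto\lambda/z$, with fixed points $\pm\sqrt\lambda$), each of which becomes a square after a purely transcendental extension; from this one should produce a birational trivialization showing the relevant form of $\PGL_2/\Gamma$ is rational over $K_4^S$, handling $D_4$, $A_4$, $\Sym_4$ by descending along $\bar S$ via a no-name argument over the rational field $K_4^{\bar S}$.

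The crux is $S\cong C_4$, say $S=\langle\sigma\rangle$ with $\sigma=(1234)$: this is the only subgroup for which $N=\langle(13)(24)\rangle$ is nontrivial \emph{and} $1\to N\to S\to\bar S\to1$ is non-split, which is precisely why an obstruction can appear. For $\Char(k)\ne2$ one computes $\bar\sigma\colon\lambda\mapsto1-\lambda$, so $K_4^{C_4}=k(t)$ with $t=\lambda(1-\lambda)$ and $K_4/k(t)$ quadratic of discriminant $1-4t$; and $\sigma$ acts on $K_4(\PGL_2)$ by $h\mapsto h\cdot\begin{pmatrix}1&\lambda-1\\1&0\end{pmatrix}$ together with $\lambda\mapsto1-\lambda$, with $g_{\sigma^2}=\begin{pmatrix}\lambda&-\lambda\\1&-\lambda\end{pmatrix}$, an involution of determinant $t$ and fixed points $\lambda\pm\sqrt{\,\lambda^2-\lambda\,}$. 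The non-split extension twists the toral part of $\PGL_2/\Gamma$, and unwinding the resulting birational model I expect the obstruction to be the single Brauer class $(1-4t,\,-1)\in\Br(k(t))$ (a cyclic algebra for $K_4/k(t)$); its residue at the simple zero $t=\tfrac14$ is the class of $-1$ in $k^\times/k^{\times2}$, so it vanishes iff $-1\in k^{\times2}$, i.e. iff $k$ contains a primitive fourth root of unity. When it does, trivializing this class produces explicit rational coordinates, which gives part (b); and if $L_4^{C_4}$ were rational over $k(t)$ then $W'\times\bbP^1$ (with $W'$ the threefold whose function field is $L_4^{C_4}$) would be rational, while it surjects onto the conic of class $(1-4t,-1)$, forcing that conic to be unirational, hence split, hence $\zeta_4\in k$. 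In characteristic $2$ the extension $K_4/k(t)$ is Artin–Schreier and the analogous $2$-torsion obstruction vanishes identically, giving (c). The main obstacle is exactly this $C_4$ analysis: constructing a birational model of $L_4^{C_4}$ over $k(t)$ refined enough both to pin down the obstruction class by a residue computation and, conversely, to see that its non-vanishing is incompatible with rationality.
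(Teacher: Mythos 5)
Your identification of the crux is exactly right: the cyclic $C_4$ is the unique subgroup with $S\cap V\neq 1$ for which $1\to S\cap V\to S\to \bar S\to 1$ fails to split, and your guessed obstruction $(1-4t,-1)\in\Br(k(t))$ is precisely the one the paper finds (its Lemma on the quadratic form $Y^2-xZ^2-xW^2$ over $k(x)$ with $x=1-4t$, and the referee's remark identifying the relevant conic with the Brauer--Severi variety of $(x,-1)$); your residue computation at $t=\tfrac14$ is a legitimate variant of the paper's substitution $x=0$. The four point-stabilizing subgroups are also handled correctly, by the same Borel-reduction-of-structure argument the paper uses.

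The genuine gap is structural and affects everything else. First, the six subgroups with $S\cap V\neq 1$ that are not $C_4$ (namely $V$, $D_4$, $A_4$, $\Sym_4$, $\langle(12)(34)\rangle$, $\langle(12),(34)\rangle$) are not actually handled: ``one should produce a birational trivialization'' is not an argument, and the specific mechanism you propose is false --- a nonsquare class such as $\lambda$ in $K_4^S$ does \emph{not} become a square after a purely transcendental extension, since $E^\times/(E^\times)^2$ injects into $E(u)^\times/(E(u)^\times)^2$. Second, even for $C_4$ you need a bridge between the conic (a curve over $k(t)$) and the full field $L_4^{C_4}$ (transcendence degree $2$ over $k(t)$) in \emph{both} directions; ``trivializing this class produces explicit rational coordinates'' does not supply the ``if'' direction. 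The paper closes both gaps with one tool you are missing: the Borel subgroup $B\subset\PGL_2$ acts generically freely on $(\bbP^1)^4/S$ and is special, so $L_4^S$ is always rational of transcendence degree $2$ over $F_4^S:=L_4^{B\times S}$, while $F_4^S/K_4^S$ has transcendence degree $1$; by L\"uroth, $L_4^S$ is rational over $K_4^S$ if and only if it is merely \emph{unirational}. This turns the split cases into a two-line argument (a complement $S'$ to $S\cap V$ fixes a point of $\{1,2,3,4\}$, so $L_4^{S'}$ is rational over $K_4^{S'}=K_4^S$ and hence $L_4^S\subset L_4^{S'}$ is unirational over $K_4^S$), and in the $C_4$ case it identifies $F_4^{C_4}$ with the function field of your conic and reduces both implications of part (b) to whether that conic has a $k(t)$-point. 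Without this reduction, or an equivalent replacement, your plan does not go through.
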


Note that the symmetric group $\Sym_4$ has exactly $11$ subgroups up to conjugacy; see~\cite{gpw}. Part (a) covers $10$ of them. Note also that for $n \leqslant 3$ and for $n \geqslant 5$ the answer to Question~\ref{q.main} is independent of the base field $k$. For $n \leqslant 3$, it is always "yes", and for $n \geqslant 5$, it depends only on $n$ and the subgroup $S \subset \Sym_n$, up to conjugacy;
see Theorem~\ref{thm.old}. A cyclic group $S$ of order $4$ in $\Sym_4$ represents the only instant where the answer to Question~\ref{q.main} depends on $k$.

We will view $L_n$, $K_n$, $L_n^S$ and $K_n^S$ as the function fields of $(\bbP^1)^n$, $(\bbP^1)^n/\PGL_2$, $(\bbP^1)^n/S$ and $(\bbP^1)^n/(\PGL_2 \times S)$,
respectively. Here and in the sequel $X/G$ will denote the rational (or Rosenlicht) quotient variety for the action of an algebraic group $G$ 
on an algebraic variety $X$ defined over $k$. Recall that the Rosenlicht quotient $X/G$ is only defined up to birational equivalence and that $k(X/G)= k(X)^G$.
For details of this construction and further references, see~\cite[Section 2]{rs}. 

The remainder of this paper will be devoted to proving Theorem~\ref{thm.main}. 
Before proceeding with the proof we would like to explain a new phenomenon which arises 
in this case and which motivated our interest in Question~\ref{q.main} for $n = 4$.
When $n \geqslant 5$, the $\PGL_2$-action on $(\bbP^1)^n/S$ is generically free. (For the definition of a generically free action, see the beginning of Section~\ref{sect.prel}.) 
Consequently, the natural projection 
\[ \pi_S \colon (\bbP^1)^n/S \to (\bbP^1)^n/(\PGL_2 \times S) \]
is a $\PGL_2$-principal homogeneous space
over the generic point $\Spec(K_n^S)$ of $(\bbP^1)^n/(\PGL_2 \times S)$. It is shown in~\cite{reichstein} that $L_n^S$ is rational over $K_n^S$
if and only if this principal homogeneous space is split; the proof of Theorem~\ref{thm.old} in~\cite{reichstein} is based 
on this observation. When $n = 4$, $\pi_S$ is also a $\PGL_2$-homogeneous space over the generic point of $(\bbP^1)^n/(\PGL_2 \times S)$, but it may not be principal.
More precisely, the geometric fibers of $\pi_S$ in general position are isomorphic to $\PGL_2/S[4]$, where $S[4] = S \cap V[4]$ 
is the intersection of $S$ with the Klein $4$-subgroup
\begin{equation} \label{e.klein}  V[4] = \{ \id, (12)(34), (13)(24), (14)(23) \},  \end{equation}
suitably embedded in $\PGL_2$. 
It is easy to see that $S[4] = \{ 1 \}$ if and only if $S$ has a fixed point in $\{ 1, 2, 3, 4 \}$. In this case $\pi_S$ is again
a principal homogeneous space over $\Spec(K_4^S)$, and the same 
argument as in~\cite{reichstein} shows that $L_4^S$ is rational over $K_4^S$.
If $S[4] \neq 1$, then the arguments from~\cite{reichstein} no longer apply, and a different approach is required.

\section{First reductions}
\label{sect.prel}

Recall that the action of an algebraic group $G$ on an irreducible algebraic variety $X$ defined over a field $k$ is called generically free if
there exists a dense open $G$-invariant subvariety $X_0 \subset X$ such that the stabilizer $G_{x_0}$ is trivial for every $\overline{k}$-point $x_0 \in X_0$.
Here, as usual, $\overline{k}$ denotes the algebraic closure of $k$.

Let $B$ denote the Borel subgroup of upper triangular matrices in $\PGL_2$. Equivalently, $B \subset \PGL_2$ 
is the stabilizer of  the point $\infty = (1:0) \in \bbP^1$.
The following lemma is undoubtedly well known. For lack of a suitable reference, we include a short proof.

\begin{lemma} \label{lem.gen-free} Let $X_4 = (\bbP^1)^4/\Sym_4 \simeq \bbP^4$ be the space of
unordered $4$-tuples of points on $\bbP^1$. Then the $B$-action on $X_4$ is generically free.
\end{lemma}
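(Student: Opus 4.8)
The plan is to show that a generic unordered $4$-tuple on $\bbP^1$ has trivial stabilizer under the Borel $B$, by writing down an explicit dense open subset of $X_4$ on which this holds. First I would fix the concrete description $X_4 \simeq \bbP^4$: an unordered $4$-tuple $\{a_1,a_2,a_3,a_4\} \subset \bbP^1$ corresponds to the binary quartic form $f(u,v) = \prod_{i=1}^4 (v u_i' - u v_i')$ up to scalar, so $X_4 = \bbP(\mathrm{Sym}^4(k^2)^\vee) = \bbP^4$, and the $B$-action on $X_4$ is the one induced by the standard $\PGL_2$-action on binary quartics restricted to upper-triangular matrices. An element $g = \begin{pmatrix} a & b \\ 0 & d \end{pmatrix} \in B$ fixes the tuple $\{a_1,\dots,a_4\}$ iff the Möbius transformation $x \mapsto (ax+b)/d$ permutes the set $\{a_1,\dots,a_4\}$ (here I am using the affine chart; note $B$ fixes $\infty$, so if $\infty$ is not among the $a_i$ the permutation is of a $4$-element subset of $\bbA^1$).

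The key step is then elementary: a nonidentity affine transformation $\varphi(x) = \alpha x + \beta$ of $\bbA^1$ (with $(\alpha,\beta)\neq(1,0)$) has at most one fixed point in $\bbA^1$, and if it has finite order permuting a $4$-element set $T \subset \bbA^1$, then $T$ must be a single $\varphi$-orbit together with fixed points, which forces strong numerical constraints on $T$. Concretely, I would take $X_0 \subset X_4$ to be the locus of tuples $\{a_1,a_2,a_3,a_4\}$ with all $a_i \in \bbA^1$ (i.e. $\infty \notin$ the tuple), all $a_i$ distinct, and such that the four points are in "general affine position" — for instance, insisting that no nontrivial affine map permutes them. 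It is cleanest to argue that this last condition is Zariski-open and nonempty directly: the set of $(a_1,\dots,a_4)$ admitting a nontrivial affine symmetry is a proper closed subset of $(\bbA^1)^4$ (it is a finite union of images of lower-dimensional families, one for each of the finitely many possible permutation types an affine map can realize on $4$ points), hence its image in $X_4$ is contained in a proper closed subset; its complement $X_0$ is dense open and $B$-invariant, and by construction every $\overline k$-point of $X_0$ has trivial $B$-stabilizer.

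The main obstacle — really the only place requiring care — is verifying nonemptiness of $X_0$ over an arbitrary field $k$, including small finite fields and $\Char k = 2, 3$, where one cannot simply "pick four generic numbers." I expect to handle this by exhibiting, after a harmless base change to $\overline k$ (trivial stabilizer is a geometric condition, and genericity can be checked over $\overline k$), an explicit quadruple with no affine symmetry: e.g. one checks that a generic point of $(\bbA^1)^4_{\overline k}$ works because the "bad" locus is a finite union of proper subvarieties as described, and $\overline k$ is infinite. One should also dispose of the boundary case where $\infty$ lies in the tuple by noting it is a proper closed condition, so it does not affect the generic behavior. Assembling these observations gives a dense open $B$-invariant $X_0$ with trivial stabilizers, which is exactly the assertion that the $B$-action on $X_4$ is generically free.
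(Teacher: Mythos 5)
Your plan is correct in outline but takes a genuinely different route from the paper. The paper avoids all case analysis by a short reduction to a known fact: if a general unordered $4$-tuple $\{p_1,\ldots,p_4\}$ had a nontrivial stabilizer $H \subset B$, then $H$ would also fix $\infty$ (being contained in $B$), hence would stabilize the unordered $5$-tuple $\{p_1,\ldots,p_4,\infty\}$; since every $5$-tuple in general position can be carried by $\PGL_2$ to one of this form, this contradicts the well-known triviality of the $\PGL_2$-stabilizer of five points of $\bbP^1$ in general position. Your approach instead classifies directly the nontrivial affine symmetries that a $4$-element subset of $\bbA^1$ can admit and bounds the dimension of the resulting bad locus; this is more hands-on but self-contained, works uniformly in all characteristics as you note, and makes the excluded locus explicit. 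One claim in your sketch needs correcting, though it does not break the argument: it is not true that $T$ ``must be a single $\varphi$-orbit together with fixed points.'' A nonidentity affine map has at most one fixed point, so an involution $x \mapsto \beta - x$ (or, in characteristic $2$, a translation $x\mapsto x+\beta$) acts on $T$ with cycle type $2+2$, yielding the family $\{a,\beta-a,b,\beta-b\}$; this is in fact the largest component of the bad locus, of dimension $3$, whereas the single-orbit-plus-fixed-point configurations (cycle types $3+1$ and $4$) give only $2$-dimensional families. Since $3 < 4 = \dim X_4$, your dimension count still closes the argument, and the complement of the closure of the ($B$-invariant) bad locus is the required dense open set.
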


\begin{proof} Denote the stabilizer of the unordered  $4$-tuple of points $\{ p_1, \ldots, p_4 \} \in (\bbP^1)^4 / \Sym_4$ by 
$H_{ p_1, \ldots, p_4 }$. Assume the contrary: $H_{p_1, \ldots, p_4} \neq 1$ for $p_1, \ldots, p_4$ in general position.

Now let $q_1, \ldots, q_5$ be a $5$-tuple of points in $\bbP^1$. Translating $q_5$ to $\infty$ by a suitable element of $\PGL_2$,
we obtain a $5$-tuple of the form $p_1, \ldots, p_4, \infty$. If $q_1, \ldots, q_5$ are in general position, then so are $p_1, \ldots, p_4$.
Hence, by our assumption, $H_{p_1, \ldots, p_4} \neq 1$. Since $H_{p_1, \ldots, p_4}$ is a subgroup of $B$, we see that $H_{p_1, \ldots, p_4}$
stabilizes $\infty$. Thus $H_{p_1, \ldots, p_4}$ stabilizes the unordered $5$-tuple $\{ p_1, \ldots, p_4, \infty \}$. 
Since the unordered $5$-tuple $\{ q_1, \ldots, q_5 \}$ lies in the same $\PGL_2$-orbit
as $\{ p_1, \ldots, p_4, \infty \}$, we conclude that the stabilizer of $\{q_1, \ldots, q_5 \}$ in $\PGL_2$ is non-trivial. 
On the other hand, it is well known that the stabilizer of an unordered $5$-tuple of points of $\bbP^1$ 
in general position is trivial, a contradiction.
\end{proof}

 In the sequel we will denote the field $L_4^B$ by $F_4$. The various invariant fields we are interested in are pictured in the diagram below. 
\[  \xymatrix{ 
  (\bbP^1)^4 \ar@{-->}[d]  \\ 
  (\bbP^1)^4/S   \ar@{-->}[d] \\ 
  (\bbP^1)^4/(B\times S) \ar@{-->}[d] \\                               
   (\bbP^1)^4/(S \times \PGL_2)} \quad \quad \quad \quad 
   \xymatrix{L_4 = k(x_1, \dots, x_4) \ar@{-}[d]  & & 4 \\
  L_4^{S}   \ar@{-}[d]^{\text{rational}}  & & 4 \\ 
  F_4^{S} \ar@{-}[d]^{\text{transcendence degree $1$}} & & 2    \\                           
 L_4^{S \times \PGL_2} = K_4^{S} & & 1} 
\]
Here the fields in the middle column represents the function fields of the varieties on the left. The right column lists the dimension of
each variety (or equivalently the transcendence degree of its function field) over $k$.
We now proceed with the main result of this section.

\begin{proposition} \label{prop.prel}
Let $S$ be a subgroup of $\Sym_4$.
Then the following are equivalent:

\smallskip
(a) $L_4^S$ is rational over $K_4^S$,

\smallskip
(b) $L_4^S$ is unirational over $K_4^S$,

\smallskip
(c) $F_4^S$ is unirational over $K_4^S$,

\smallskip
(d) $F_4^S$ is rational over $K_4^S$.
\end{proposition}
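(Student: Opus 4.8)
The plan is to prove the four statements equivalent by running the cycle of implications (a) $\Rightarrow$ (b) $\Rightarrow$ (c) $\Rightarrow$ (d) $\Rightarrow$ (a); the first and third arrows are essentially formal and the real content is in (c) $\Rightarrow$ (d). The implication (a) $\Rightarrow$ (b) holds because rationality implies unirationality. For (b) $\Rightarrow$ (c), note that the $\PGL_2$- and $\Sym_4$-actions on $L_4$ commute, hence so do the $B$- and $S$-actions, and therefore $F_4^S = L_4^{B \times S}$ is a subfield of $L_4^S$ containing $K_4^S$. Any $K_4^S$-embedding of $L_4^S$ into a purely transcendental extension $K_4^S(t_1, \dots, t_m)$ restricts to a $K_4^S$-embedding of the subfield $F_4^S$ into the same field, so $F_4^S$ is unirational over $K_4^S$.

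For (d) $\Rightarrow$ (a) I would first record that the $B$-action on $(\bbP^1)^4/S$ is generically free: the finite surjection $q \colon (\bbP^1)^4/S \to (\bbP^1)^4/\Sym_4 = X_4$ is $B$-equivariant, so for a point $y$ in general position $\Stab_B(y)$ injects into $\Stab_B(q(y))$, which is trivial by Lemma~\ref{lem.gen-free}. Since $B$ is a connected solvable — hence special and rational — linear algebraic group, the quotient map $(\bbP^1)^4/S \to (\bbP^1)^4/(B \times S)$ is a Zariski-locally trivial $B$-fibration, and the no-name lemma gives that $L_4^S$ is purely transcendental of degree $\dim B = 2$ over $F_4^S$; this is the edge labelled ``rational'' in the diagram. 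Composing with the hypothesis of (d), namely that $F_4^S$ is rational over $K_4^S$, yields that $L_4^S$ is rational over $K_4^S$.

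The crux is (c) $\Rightarrow$ (d), where I would use that $\trdeg(F_4^S / K_4^S) = 1$ together with the fact that $K_4^S$ is an infinite field (being of transcendence degree $1$ over $k$). Assume (c): a $K_4^S$-embedding $F_4^S \hookrightarrow K_4^S(t_1, \dots, t_m)$ is the same as a dominant rational map $\varphi \colon \bbA^m_{K_4^S} \dashrightarrow C$, where $C$ is the regular projective curve with function field $F_4^S$. The fibres of $\varphi$ in general position have dimension $m-1$, so a standard dimension count on the space of lines in $\bbA^m_{K_4^S}$ shows that a general $K_4^S$-rational line $\ell$ meets the domain of definition of $\varphi$ in a dense subset and is not contained in any fibre of $\varphi$; hence $\varphi|_\ell \colon \ell \cong \bbA^1_{K_4^S} \dashrightarrow C$ is dominant. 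This realizes $F_4^S$ as a subfield of $K_4^S(\ell) \cong K_4^S(t)$ containing $K_4^S$, so by Lüroth's theorem $F_4^S = K_4^S(s)$, which is (d). Then (d) $\Rightarrow$ (a) as above closes the cycle.

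I expect the one delicate point to be (c) $\Rightarrow$ (d), specifically the ``restrict to a general line'' step: one must be sure such lines exist (this is exactly where infiniteness of $K_4^S$ is used) and that a general line really maps onto $C$ rather than into a point. With that in hand, classical Lüroth finishes the argument uniformly in all characteristics, so no separate treatment of $\Char(k)=p$ is needed here; everything else is bookkeeping with the no-name lemma and with the fact that the $B$- and $S$-actions commute.
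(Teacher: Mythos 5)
Your proposal is correct and follows essentially the same route as the paper: the same cycle (a)~$\Rightarrow$~(b)~$\Rightarrow$~(c)~$\Rightarrow$~(d)~$\Rightarrow$~(a), with (c)~$\Rightarrow$~(d) coming from L\"uroth's theorem applied to the transcendence-degree-one extension $F_4^S/K_4^S$, and (d)~$\Rightarrow$~(a) from generic freeness of the $B$-action (Lemma~\ref{lem.gen-free}) together with $B$ being special and rational. The only difference is that you prove the ``unirational of transcendence degree one implies rational'' reduction by hand via the general-line argument, whereas the paper simply cites this form of L\"uroth's theorem.
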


\begin{proof}[Proof of Proposition~\ref{prop.prel}]
The implication (a) $\Longrightarrow$ (b) $\Longrightarrow$ (c) are obvious.
The field extension $F_4^S/K_4^S$ is of transcendence degree $2 - 1 = 1$. Hence,
the implication (c) $\Longrightarrow$ (d) follows from L\"uroth's theorem~\footnote{Recall that L\"uroth's theorem asserts that
a unirational field extension of transcendence degree $1$ is rational; a proof can be found,
e.g., in~\cite[Chapter 8]{jacobson} or \cite[Chapter 10]{vdw}.}.

To prove the remaining implication (d) $\Longrightarrow$ (a), it suffices to show that $L_4^S$ is rational over $F_4^S$.
By Lemma~\ref{lem.gen-free} the action of $B$ on the space $(\bbP^1)^4/\Sym_4 \simeq \bbP^4$ 
of unordered $4$-tuples of points on $\bbP^1$ is generically free. Hence, so is the
action of $B$ on $(\bbP^1)^4/S$ for any subgroup $S \subset \Sym_4$. Since $B$ is a special group (see~\cite[Proposition II.1.2.1]{serre-gc}), 
this implies that $(\bbP^1)^4/S$ is birationally isomorphic to $\big( \, (\bbP^1)^4/(B \times S) \, \big) \times B$. Since $B$ is
a rational $2$-dimensional variety over $k$, we conclude that $L_4^S$ is rational of transcendence degree $2$ over $F_4^S$, as claimed.
\end{proof}

\section{Proof of Theorem~\ref{thm.main}(a)}

Let $S$ be a subgroup of $\Sym_4$ and consider the exact sequence
\begin{equation} \label{e.S_4} 
1 \to S[4] \to S \to S/S[4] \to 1 \, , 
\end{equation}
where
$V[4]$ is the Klein $4$-subgroup as in~\eqref{e.klein}, $S[4] := S \cap V[4]$, and $S/S[4]$ is a subgroup of 
$\Sym_4/V[4] \simeq \Sym_3$. Note that $S[4]$ acts trivially on $K_4$, and $S/S[4]$ acts faithfully.

\begin{lemma} \label{lem.split}
The sequence~\eqref{e.S_4} does not split if and only if $S \subset \Sym_4$ is a cyclic subgroup of order $4$. 
\end{lemma}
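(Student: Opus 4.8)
The plan is to work through the short exact sequence~\eqref{e.S_4} according to the order of $S[4] = S \cap V[4]$, which can only be $1$, $2$, or $4$ since $|V[4]| = 4$. The two extreme cases are immediate. If $|S[4]| = 1$, the sequence reads $1 \to 1 \to S \to S/S[4] \to 1$ with $S \to S/S[4]$ an isomorphism, and splits. If $|S[4]| = 4$, then $V[4] \subseteq S$, and here I would use the fact that the projection $\Sym_4 \to \Sym_4/V[4] \cong \Sym_3$ is split by the point-stabilizer $\Sym_3 \hookrightarrow \Sym_4$ (say the stabilizer of $4$): it meets $V[4]$ trivially, because every nontrivial element of $V[4]$ is fixed-point-free, and it has order $6$, hence maps isomorphically onto $\Sym_4/V[4]$. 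If $\sigma$ denotes this section and $\bar S = S/V[4] \subseteq \Sym_4/V[4]$, then $\sigma(\bar S)$ is contained in $S$ (it lies in the preimage of $\bar S$, which is $S$) and meets $V[4]$ trivially, so it is a complement to $S[4]$ in $S$ and the sequence splits. Equivalently one lists the subgroups of $\Sym_4$ containing $V[4]$ — namely $V[4]$, the three conjugates of the dihedral group of order $8$, $A_4$ and $\Sym_4$ — and exhibits a complement in each.

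The real work is the case $|S[4]| = 2$, say $S[4] = \langle t \rangle$ with $t$ a fixed-point-free double transposition. Since $S/S[4]$ embeds into $\Sym_3$, we get $|S| = 2 \cdot |S/S[4]| \in \{2, 4, 6, 12\}$, and the first step is to rule out $|S| \in \{6, 12\}$: if $|S| = 6$ then $S \cong \Sym_3$ (as $\Sym_4$ has no element of order $6$), but every copy of $\Sym_3$ in $\Sym_4$ is a point-stabilizer and hence meets $V[4]$ trivially, contradicting $|S[4]| = 2$; and if $|S| = 12$ then $S = A_4$, the unique subgroup of $\Sym_4$ of that order, whereas $A_4 \supseteq V[4]$ forces $|S[4]| = 4$. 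For $|S| = 2$ we have $S = S[4]$, which splits. For $|S| = 4$, the group $S$ is either a Klein four-group or cyclic of order $4$: in the Klein four case, after conjugating so that $t = (12)(34)$, the two remaining involutions of $S$ must be transpositions commuting with $t$, which forces $S = \{1, (12), (34), (12)(34)\}$, and then $\langle (12) \rangle$ is a complement; in the cyclic case $S \cong C_4$ has a unique subgroup of order $2$, which is $S[4]$ itself, so no complement exists and the sequence does not split.

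For the converse, a cyclic subgroup of order $4$ in $\Sym_4$ is generated by a $4$-cycle $c$, and $c^2 = (c^{-1})c$ is a fixed-point-free double transposition, so $|S[4]| = 2$ and we are in the non-split case just described. Putting everything together, the sequence~\eqref{e.S_4} fails to split precisely when $S$ is cyclic of order $4$.

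The "main obstacle" is really a matter of care rather than of genuine difficulty: one must be sure that the list of possibilities for $|S|$ when $|S[4]| = 2$ is exhausted and that the orders $6$ and $12$ are honestly excluded, which rests on elementary features of $\Sym_4$ (no element of order $6$; the only subgroup of order $12$ is $A_4 \supseteq V[4]$; every copy of $\Sym_3$ is a point-stabilizer and so meets $V[4]$ trivially). The conceptual heart of the argument is simply the observation that a cyclic group of order $4$ has a unique involution, which is exactly what prevents the sequence from splitting.
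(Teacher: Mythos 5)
Your proof is correct, and it follows the same overall strategy as the paper: split off the easy cases $S[4]=1$ and $S[4]=V[4]$, then run through the possible orders of $S$ when $|S[4]|=2$. Two points are worth recording. First, in the crucial case $|S|=4$ you are actually more careful than the paper, which asserts that up to conjugacy $\Sym_4$ has only two subgroups of order $4$ ($V[4]$ and a cyclic one); this overlooks the non-normal Klein four-subgroups such as $\{1,(12),(34),(12)(34)\}$, which do satisfy $|S[4]|=2$. Your argument identifies exactly these subgroups (the two involutions outside $V[4]$ must be transpositions commuting with $t$) and exhibits the complement $\langle (12)\rangle$, so your case analysis is genuinely exhaustive where the paper's enumeration is slightly off; the lemma itself is unaffected either way. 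Second, a trivial slip: you write ``$c^2=(c^{-1})c$'', which is the identity; you clearly mean simply that $c^2$ is a fixed-point-free double transposition, which is what the argument uses. Everything else --- the section $\Sym_3\hookrightarrow\Sym_4$ giving a complement when $V[4]\subseteq S$, the exclusion of orders $6$ and $12$, and the observation that $C_4$ has a unique involution so no complement to $S[4]$ can exist --- is sound.
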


\begin{proof}[Proof of Lemma~\ref{lem.split}]
Observe that the sequence~\eqref{e.S_4} splits in the following two cases:
\[ \text{(i) if $S[4] = 1$} \quad \quad \text{or} \quad \quad \text{(ii) if $S = \Sym_4$.} \]

(i) is obvious, and (ii) follows from the fact that 
$\Sym_3$, naturally embedded into $\Sym_4$, is a complement to $V[4]$.

(ii) implies that the sequence~\eqref{e.S_4} splits whenever $S[4] = V[4]$. Thus we may assume without loss of generality
that $S[4]$ has order $2$. 
Now $S/S[4]$ is a subgroup of
$\Sym_3$, so has order $1$, $2$, $3$ or $6$. Let us consider these possibilities in turn.

If $|S/S[4]| = 1$ or $3$, then $|S| = 2$ or $6$, respectively. Clearly,~\eqref{e.S_4} splits in both cases. 

If $|S/S[4]| = 6$, then $|S| = 12$, so $S = A_4$ is the alternating group, and $S[4] = V[4]$, contradicting our assumption that $|S[4]| = 2$. 

This leaves us with the case, where $|S/S[4]| = 2$, i.e., $|S| = 4$. Up to conjugacy there are only two subgroups of order $4$ in $\Sym_4$, namely $V[4]$ and a cyclic subgroup generated by a $4$-cycle. 
Clearly $S \neq V[4]$, because we are assuming that $S[4] = S \cap V[4]$ has order $2$. Thus $S$ is the group of order $4$
generated by a $4$-cycle $\sigma$. In this case $S[4] = \langle \sigma^2 \rangle$, and the sequence~\eqref{e.S_4} does not split.
\end{proof}

We are now ready to complete the proof of Theorem~\ref{thm.main}(a).  

\smallskip
Case 1: $S[4] = 1$. This is equivalent to the condition that $S$ has a fixed point in $\{ 1, 2, 3, 4\}$.
As we mentioned in the Introduction, in this case the $\PGL_2 \times S$-action on $(\bbP^1)^4$ is generically free, and the same argument used 
to prove Theorem~\ref{thm.old} in \cite{reichstein} goes through unchanged. (For a self-contained proof, see Remark~\ref{rem.case1} below.) 
We conclude that $L_4^S$ is rational over $K_4^S$.

\smallskip
Case 2: $S \subset \Sym_4$ is not cyclic of order $4$. 
By Lemma~\ref{lem.split}, the sequence~\eqref{e.S_4} splits. Let $S' \subset S$ be a complement to $S[4]$.
By Case 1, $L_4^{S'}$ is rational over $K_4^S$. The diagram 
\[  \xymatrix{L_4^{S'}   \ar@{-}[d]  & \ar@{-}@/^1pc/[dd]^{\text{rational}}  \\
 L_4^{S} \ar@{-}[d] \ar@{-}@/^1pc/[d]^{\text{unirational}} &      \\                           
 K_4^{S}  = K_4^{S'} & }\]
now shows that $L_4^S$ is unirational over $K_4^S$. By Proposition~\ref{prop.prel}, we conclude that $L_4^S$ is rational over $K_4^S$.
\qed

\begin{remark} \label{rem.case1} 
For the sake of completeness we will now outline a self-contained geometric proof of Theorem~\ref{thm.main}(a) in Case 1.

As we mentioned above, since $S[4] = 1$, $S$ fixes an element of $\{1, 2, 3, 4\}$. 
After replacing $S$ by a conjugate in $\Sym_4$ we may assume without loss of generality that $S$ fixes $4$, i.e., $S \subset \Sym_3$. Consider the diagram of
$\PGL_n$-equivariant rational maps
\[  \xymatrix{ 
  (\bbP^1)^4 \ar@{-->}[d]  \ar@{->}[dr]^{p_4}    &        &  \\ 
  (\bbP^1)^4/S   \ar@{-->}[d]^{\pi}    \ar@{-->}[r]^{\overline{p_4}}             & \bbP^1  \ar@{<->}[r]^{\sim \quad \; \; } & \PGL_2/B \\                               
   (\bbP^1)^4/(S \times \PGL_2). & & }
\]
Here $B$ is the Borel subgroup of upper-triangular matrices in $\PGL_2$, as in Section~\ref{sect.prel}, and
$p_4 \colon (\bbP^1)^4 \to \bbP^1$ is the projection to the $4$th factor. Since $S \subset \Sym_3$, $p_4$ descends to a $\PGL_2$-equivariant map
$(\bbP^1)^4/S \dasharrow \bbP^1$, which we denote by $\overline{p_4}$.  The function fields of $(\bbP^1)^4$, $(\bbP^1)^4/S$ and $(\bbP^1)^4/(S \times \PGL_2)$
are $L_4$, $L_4^S$ and $K_4^S$, respectively; see the diagram in Section~\ref{sect.prel}.
The $\PGL_2$-action on $(\bbP^1)^4/S$ is generically free; hence, $\pi$ is a $\PGL_2$-torsor over the generic point
of $(\bbP^1)^4/(S \times \PGL_2)$. Our goal is to prove that this torsor is split; this will show that $(\bbP^1)^4/S$ is birational to
$(\bbP^1)^4/(S \times \PGL_2) \times B$ and thus $L_4^S$ is rational over $K_4^S$.

Since there exists a $\PGL_2$-equivariant map from the total space of $\pi$ to $\PGL_2/B$, $\pi$ admits reduction of structure to
the Borel subgroup $B$. In other words, the class $[\pi]$ of $\pi$ in $H^1(K_4^S, \PGL_2)$ lies in the image of the natural map $H^1(K_4^S, B) \to H^1(K_4^S, \PGL_2)$. As we mentioned in the proof of Proposition~\ref{prop.prel}, $B$ is a special group. In particular $H^1(K_4^S, B) = 1$, 
and the desired conclusion follows.  
\end{remark}

\section{A preliminary computation}

The approach we used in the previous section does not work when $S$ is a cyclic group of order $4$. To prove parts (b) and (c) of Theorem~\ref{thm.main}
we will resort to explicit calculations in the next section. The following lemma will facilitate these calculations.

\begin{lemma} \label{lem.no-name} Let $k$ be a field and $a$, $u$ be independent variables over $k$.

\smallskip
(a) Let $\sigma$ be an automorphism of $k(a, u)/k$ of order $2$
given by $\sigma(a) = 1-a$ and $\sigma(u) = - \dfrac{1}{u}$. Assume $\Char(k) \neq 2$.
Then $k(a, u)^{\langle \sigma \rangle}$ is rational over $k(a)^{\langle \sigma \rangle}$ if and only if $k$ contains a primitive $4$th root of unity.

\smallskip
(b) Assume $\Char(k) = 2$. Let $\sigma$ be an automorphism of $k(a, u)/k$ of order $2$
given by $\sigma(a) = a + 1$ and $\sigma(u) = u + 1$. 
Then $k(a, u)^{\langle \sigma \rangle}$ is rational over $k(a)^{\langle \sigma \rangle}$.
\end{lemma}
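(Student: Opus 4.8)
The plan is to treat the two parts by the same two-step strategy: first compute the fixed field $k(a)^{\langle\sigma\rangle}$ explicitly as a rational function field $k(s)$ in one variable $s$, then analyze the extension $k(a,u)^{\langle\sigma\rangle}/k(s)$ as a function field of a conic (or a product) over $k(s)$ and decide when that conic has a rational point over $k(s)$, equivalently when the invariant field is rational over $k(s)$. The transcendence degree of $k(a,u)^{\langle\sigma\rangle}$ over $k(a)^{\langle\sigma\rangle}$ is $1$, so by L\"uroth this field is rational over $k(s)$ iff it is unirational over $k(s)$, and the whole question becomes: does a certain genus-zero curve over $k(s)$ have a $k(s)$-point?

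For part (a), with $\sigma(a)=1-a$, the invariant $s=a(1-a)$ (or $s=a+(1-a)=1$ is trivial, so take the product) generates $k(a)^{\langle\sigma\rangle}=k(s)$; note $a$ satisfies $a^2-a+s=0$ over $k(s)$, so $k(a)=k(s)(\sqrt{1-4s})$ up to the usual normalization, a quadratic extension. Now $\sigma$ acts on $u$ by $u\mapsto -1/u$, an order-$2$ $\PGL_2$-action on $\bbP^1_u$ with no fixed $k$-point when $\sqrt{-1}\notin k$ (the fixed points are $u=\pm i$). The fixed field $k(a,u)^{\langle\sigma\rangle}$ is the function field of the quotient of $\bbP^1_a\times\bbP^1_u$ by the diagonal $\langle\sigma\rangle$, which is a conic bundle over $\bbP^1_s$; I expect its generic fiber to be the conic $C$ over $k(s)$ given by something like $X^2+Y^2 = (1-4s)Z^2$ (the "twist" by the quadratic extension $k(a)/k(s)$ acting on the $u$-line $x^2+y^2=z^2$). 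This conic has a $k(s)$-point iff $-1$ is a norm from $k(s)(\sqrt{1-4s})$, and a short specialization/residue argument at $s=\infty$ or $s=1/4$ shows this forces $-1$ to be a square in $k$, i.e. $k$ contains a primitive $4$th root of unity; conversely if $i\in k$ the $\sigma$-action on $u$ is diagonalizable over $k$, $u\mapsto -1/u$ becomes $v\mapsto -v$ after $v=(u-i)/(u+i)$, and the no-name lemma (or a direct computation with $v^2$ and $av$-type invariants) gives rationality.

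For part (b), with $\Char(k)=2$ and $\sigma$ an Artin--Schreier-type translation $a\mapsto a+1$, $u\mapsto u+1$: here $s=a^2+a=a(a+1)$ is $\sigma$-invariant and $k(a)^{\langle\sigma\rangle}=k(s)$, with $k(a)/k(s)$ the Artin--Schreier extension $a^2+a=s$. The new variable $w=u+a$ is $\sigma$-invariant (since both $u$ and $a$ shift by $1$), and clearly $k(a,u)=k(a,w)$, so $k(a,u)^{\langle\sigma\rangle}=k(a,w)^{\langle\sigma\rangle}$ where now $\sigma$ fixes $w$ and acts on $a$ alone by $a\mapsto a+1$; hence the fixed field is just $k(s,w)=k(w)(s)$, manifestly rational over $k(s)$. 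This is the easy case and I do not expect any obstacle.

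The main obstacle is the explicit identification in part (a) of the conic $C$ over $k(s)$ and the clean norm criterion: one must carefully track how the quadratic twist by $k(a)/k(s)$ interacts with the $\PGL_2$-action $u\mapsto -1/u$ on $\bbP^1_u$, produce the right model of $C$, and then run a valuation-theoretic argument (evaluating the relevant quaternion symbol, or specializing $s$) to descend the obstruction from $k(s)$ to $k$. The rest — the reverse implication when $i\in k$, and all of part (b) — is a direct change of variables. I would organize the section so that Lemma~\ref{lem.no-name} is proved first in case (b), then case (a), and then feed both into the computation of $F_4^S$ for $S$ cyclic of order $4$ in the next section.
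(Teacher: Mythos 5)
Your strategy is sound and, for part (a), essentially the one the paper follows: compute $k(a)^{\langle\sigma\rangle}=k(s)$, realize $k(a,u)^{\langle\sigma\rangle}$ as the function field of a conic over $k(s)$, and detect the obstruction by specializing at the branch locus of $k(a)/k(s)$. Your guessed conic is the right one: the twist of $\bbP^1_u$ by $u\mapsto -1/u$ through the quadratic extension $k(a)=k(s)(\sqrt{1-4s})$ is the conic of the quaternion algebra $(-1,\,1-4s)$, which is exactly what the paper obtains (in the variable $x=(1-2a)^2=1-4s$ its conic is $Y^2-xZ^2-xW^2=0$, the conic of $(x,-1)$; cf.\ Remark~\ref{rem.referee}). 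The one step you leave hedged --- actually exhibiting generators of the fixed field satisfying that conic equation --- is the crux of the forward implication, and the paper supplies it with the explicit invariants $y=\tfrac{b}{2}(u+u^{-1})$ and $z=\tfrac12(u-u^{-1})$, $b=1-2a$, which satisfy $y^2-xz^2-x=0$ and generate the fixed field by a degree count; your specialization at $s=1/4$ is literally the paper's polynomial argument at $x=0$, and your converse (diagonalizing $u\mapsto -1/u$ over $k(i)$ via $v=(u-i)/(u+i)$ and invoking the no-name lemma) is valid, though the paper just exhibits the point $(0:i:1)$ on the conic. Where you genuinely depart from, and improve on, the paper is part (b): noting that $w=u+a$ is $\sigma$-invariant and $k(a,u)=k(a)(w)$ gives at once $k(a,u)^{\langle\sigma\rangle}=k(a)^{\langle\sigma\rangle}(w)=k(s)(w)$ with $s=a^2+a$, bypassing the paper's computation of the conic $Z^2+ZW+YW+xW^2=0$ and its rational point $(x:1:1)$. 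In summary: no gap in (b); in (a) the approach is correct and matches the paper's, but a complete write-up must include the routine verification that the fixed field really is the function field of the predicted conic.
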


\begin{proof} (a) Set $b = 1 - 2a$, so that $\sigma(b) = - 1 + 2a = - b$. Since $\Char(k) \neq 2$, we have $k(a,u) = k(b,u)$ and $k(a) = k(b)$. 
Thus part (a) is equivalent to

\smallskip
(a$'$) \textit{$k(b,u)^{\langle \sigma \rangle}$ is rational over $k(b)^{\langle \sigma \rangle}$ if and only if $k$ contains a 4th root of unity. }

\smallskip
Clearly $k(b)^{\langle \sigma \rangle} = k(x)$, where $x = b^2$. We claim that $k(b, x)^{\langle \sigma \rangle} = k(x, y, z)$, 
where \[y = \frac{b}{2} (u + \frac{1}{u}) \quad \text{and} \quad z = \frac{1}{2} (u  - \frac{1}{u}). \]
Indeed, one readily checks that $x, y, z \in k(b, x)^{\langle \sigma \rangle}$. 
It remains to show that $[k(b, u) : k(x, y, z)] \leqslant 2$; the claim will immediately follow from the diagram below.  
\[ \xymatrix{k(b, u)  \ar@{-}[d] \ar@{-}@/^1pc/[d]^{\text{degree $2$}} & \ar@{-}@/^1pc/[dd]^{\text{degree $\leqslant 2$}}  \\
k(b, u)^{\langle \sigma \rangle} \ar@{-}[d]  &   \\
k(x, y, z). & } 
 \]
To show that $[k(b, u) : k(x, y, z)] \leqslant 2$, note that $k(b,u)= k(x,y,z)(u)$ and $u$ satisfies the quadratic equation 
$u^2 - 2zu - 1 = 0$ over $k(x,y,z)$. This proves the claim. 

We have thus reduced part (a) to the following assertion:

\smallskip
(a$''$) \textit{$k(x, y, z)$ is rational over $k(x)$ if and only if $k$ contains a primitive $4$th root of unity. }

\smallskip
Note that $k(x, y, z) = k(b, u)^{\langle \sigma \rangle}$ is of transcendence degree $2$ over $k$ and 
\[
y^2 - xz^2 - x = 0.  
\]
This equation defines a conic in $\bbA^2$ over the field $k(x)$. Since this conic is absolutely irreducible, 
$k(x, y, z)$ is the function field of this conic. This field is rational over $k(x)$ if and only if the
projective conic
\[ \label{proj.conic.charnot2}
Y^2 - x Z^2 - x W^2 = 0 
\]
has a $k(x)$-point. Here $Y$, $Z$ and $W$ are homogeneous coordinates in $\bbP^2$. It thus remains to show that

\smallskip
(a$'''$) \textit{The quadratic form $q(Y, Z, W) = Y^2 - x Z^2 - x W^2$ is isotropic over $k(x)$ if and only if $k$ contains a primitive $4$th root of unity. }

\smallskip
Suppose $k$ contains a primitive $4$th root of unity. Denote it by $i$. Then $q (0 , i , 1) = 0$, so $q$ is isotropic over $k(x)$. 

Conversely, assume $q$ is isotropic over $k(x)$. That is, $q(A(x), B(x), C(x)) = 0$ for some
$A(x), B(x), C(x) \in k(x)$, not all zero. After clearing denominators, we may assume that $A(x)$, $B(x)$ and $C(x)$ are polynomials with coefficients in
$k$, and $(A(0), B(0), C(0)) \neq (0, 0, 0)$. Substituting $x = 0$ into the equation
$A(x)^2 = x(B(x)^2 + C(x)^2)$, we see that $A(0) = 0$. Thus the left hand side is divisible by $x^2$ and consequently, $B(0)^2 + C(0)^2 = 0$, where
$B(0)$ and $C(0)$ are not both $0$. This means that neither can be $0$, and $B(0)/C(0)$ is a primitive $4$th root of unity in $k$. This completes the proof of (a$'''$) and thus of part (a).

 \smallskip
(b) Set $ x = a(1+a), y = u(1 + u), z = a + u$. Note that $x$, $y$, $z$ are invariant under $\sigma$ and $[k(a, u): k(x, y, z)] \leqslant 2$, because
$k(a, u)$ is generated by $u$ over $k(x, y, z)$, and $u$ satisfies the quadratic equation $u^2 + u + y = 0$ over $k(x, y, z)$. Using the same argument 
as in part (a), we see that
$k(a)^{\langle \sigma \rangle} = k(x)$, $k(a,u)^{\langle \sigma \rangle} = k(x,y,z)$ and $k(x, y, z)$ is the function field of the affine quadric 
\[ z^2 + z + y + x = 0 \]
over $k(x)$. Equivalently, $k(x, y, z)$ is the function field of the projective conic
\[ Z^2 + ZW + YW + xW^2 = 0, \]
over $k(x)$, where $Y$, $Z$ and $W$ are homogeneous coordinates in $\bbP^2$. This conic has a $k(x)$-point $(Y: Z: W) = (x: 1 : 1)$.
Thus $k(a,u)^{\langle \sigma \rangle} = k(x,y,z)$ is rational over $k(a)^{\langle \sigma \rangle} = k(x)$. 
\end{proof}

\begin{remark} \label{rem.referee}
In this remark we will briefly outline an alternative proof of Lemma~\ref{lem.no-name} 
which was suggested to us by the referee.

(a) The argument on p.~371 in~\cite{hkk} shows that $k(a, u)^{\langle \sigma \rangle}$ is the function field of the Brauer-Severi variety of the quaternion algebra
$(x, -1)$ over the field $k(a)^{\langle \sigma \rangle} = k(x)$, where $x = b^2$, as in our proof. Thus $k(a, u)^{\langle \sigma \rangle}$ is rational over $k(a)^{\langle \sigma \rangle} = k(x)$ if and
only if the quaternion algebra $(x, -1)$ splits over $k(x)$. On the other hand, $(x, -1)$ splits over $k(x)$ if and only if $-1$ is a square in $k$; 
see, e.g.,~\cite[Chapter 1, Example 1.3.8]{gille_szamuely}. 

(b) follows from~\cite[Theorem 2.2]{kang-wang}. \qed
\end{remark}

\section{Conclusion of the proof of Theorem~\ref{thm.main}}

Our goal is to prove parts (b) and (c) of Theorem~\ref{thm.main}. We may assume without loss of generality 
that $S = \langle \sigma \rangle \subset \Sym_4$, where $\sigma$ is the $4$-cycle $(1 \, 2 \, 3 \, 4 )$.

We begin by constructing a convenient birational model for $(\bbP^1)^4/B$, where the action of $\Sym_4$ is particularly transparent.
(Recall that a priori the rational quotient $(\bbP^1)^4/B$ is only defined up to birational isomorphism.)
Let $V$ be the $4$-dimensional $k$-vector space, $x_1, \dots, x_4$ be a basis for the dual space $V^*$, 
$V_1$ is the 1-dimensional subspace of $V$ spanned by the vector $(1, 1, 1, 1)$ and $\overline{V} = V/ V_1$.
The dual space $\overline{V}^{\, *}$ is the $3$-dimensional subspace of $V^*$ consisting of linear functions $\lambda_1 x_1 + \ldots + \lambda_1 x_4$ such that $\lambda_1 + \ldots + \lambda_4 = 0$.

The Borel subgroup $B$ of upper triangular matrices in $\PGL_2$ decomposes as a semidirect product $U \rtimes T$, where $T \simeq \bbG_m$ is the diagonal maximal
torus and $U \simeq \bbG_a$ is the group of matrices of the form $\begin{pmatrix} 1 & \ast \\ 0 & 1 \end{pmatrix}$.
One readily checks that the rational quotient $(\bbP^1)^4/U$ is $\Sym_4$-equivariantly birationally isomorphic to the two-dimensional affine space
$\bbA(\overline{V})$ and the rational quotient 
$(\bbP^1)^4/B$ is $\Sym_4$-equivariantly birationally isomorphic to the $2$-dimensional projective space \[ ((\bbP^1)^4/U)/T \simeq \bbA(\overline{V})/\bbG_m \simeq \bbP(\overline{V}). \]
In other words, $F_4 \stackrel{\rm def}{=} L_4^B$ is $\Sym_4$-equivariantly isomorphic to the function field of $\bbP(\overline{V})$ over $k$.
That is, 
\[ F_4 = k \big(\; \frac{\lambda_1 x_1 + \ldots + \lambda_4 x_4}{\mu_1 x_1 +  \ldots + \mu_4 x_4} \; | \; \lambda_1 + \ldots \lambda_4 = \mu_1 + \ldots + \mu_4 = 0 \big)\, ; \]
cf.~formula (4) on p. 904 in~\cite{tsunogai}. 

\begin{proof}[Proof of Theorem~\ref{thm.main}(b)]
Assume $\Char(k) \neq 2$. Then the linear functions 
\[ \begin{matrix}  w = - x_1 - x_2 + x_3 + x_4, \\
 y = - x_1 +  x_2 + x_3 - x_4, \\
 z = -x_1 + x_2 - x_3 + x_4 \\
 \end{matrix} \]
form a $k$-basis for $\overline{V}^{\, *}$. 
Note that 
\begin{align*} x_4 - x_1 = \frac{1}{2}(w + z), &   &
 x_3 - x_2 = \frac{1}{2}(w - z), \\
 x_4 - x_2 = \frac{1}{2}(w - y), &  &
 x_3 - x_1 = \frac{1}{2}(w + y).
\end{align*}
Now recall that by~\eqref{e.cr},
\begin{equation} \label{e.cr2} \text{$K_4 = k(a)$, where $a = \dfrac{(x_4 - x_1)(x_3 - x_2)}{(x_4 - x_2)(x_3 - x_1)}$.} 
\end{equation}
The generic fiber of the natural projection map $(\bbP^1)^4/B \to (\bbP^1)^4/\PGL_2$ 
% over the generic point $\Spec(K_4) \to (\bbP^1)^4/\PGL_2$
is the quadric in $\bbP_{K_4}^3$ given by
\[ (x_4 - x_1) (x_3 - x_2) = a(x_4 - x_2)(x_3 - x_1) \]
or equivalently, $w^2 - z^2 = a(w^2 - y^2)$ or
\[ (1 - a) w^2 - z^2 + a y^2 = 0 \, . \]
Setting $u = w/y$ and $t = z/y$, we see that $F_4 = k(a, u, t)$, where 
\begin{equation} \label{e.affine-conic} (1-a) u^2 - t^2 + a = 0. 
\end{equation}
We claim that $F_4^{\langle \sigma^2 \rangle} = k(a, u)$. Indeed, since 
$\sigma(w) = - y$, $\sigma(y) = w$, $\sigma(z) = - z$, and $\sigma(a) = 1 - a$, we see that
\[ \sigma^2(a) = a, \; \; \sigma^2(u) = u, \; \; \; \text{and} \; \; \; \sigma^2(t) = - t. \]
Now consider the following diagram of field extensions
\begin{equation}\label{fdia} \xymatrix{F_4  \ar@{-}[d] \ar@{-}@/^1pc/[d]^{\text{degree $2$}} & \ar@{-}@/^1pc/[dd]^{\text{degree $\leqslant 2$}}  \\
F_4^{\langle \sigma^2 \rangle} \ar@{-}[d]  &   \\
k(a, u) \ar@{-}[d]  &      \\                           
 K_4 = k(a). & } 
 \end{equation}
Here $[F_4: F_4^{\langle \sigma^2 \rangle}] = 2$ because $\sigma^2$ is an automorphism of order $2$, and $[F_4 : k(a, u)] \leqslant 2$ because $t$ satisfies a quadratic equation over $k(u, a)$; see~\eqref{e.affine-conic}. We conclude that $F_4^{\langle \sigma^2 \rangle} = k(a,u)$. This proves the claim.

Note that since $\trdeg_k(F_4^{\langle \sigma^2 \rangle}) = \trdeg_k(F_4) = 2$, $a$ and $u$ are algebraically independent over
$k$. The group $S/\langle \sigma^2 \rangle \simeq \bbZ/2\bbZ $ acts on $F_4^{\langle \sigma^2 \rangle} = k(a,u)$ by $\sigma \colon a \mapsto 1 - a$ and $\sigma \colon u \mapsto -1/u$. Lemma~\ref{lem.no-name}(a) now tells us that
$F_4^{S} = (F_4^{\langle \sigma^2 \rangle})^{S/{\langle \sigma^2 \rangle}} = k(a, u)^{\langle \sigma \rangle}$ 
is rational over $K_4^S = k(a)^{\langle \sigma \rangle}$ if and only if $k$ contains a primitive $4$th root of unity.
\end{proof}

\begin{proof}[Proof of Theorem~\ref{thm.main}(c)] Now assume that $\Char k = 2$.
The linear functions 
\[ \begin{array}{l} w = x_1 + x_2 + x_3 + x_4, \\
y = x_1 + x_3 , \\
z = x_1 + x_4
\end{array} \]
form a $k$-basis for $\overline{V}^{\, *}$. Again by ~\eqref{e.cr2}, the fiber of the natural projection map $(\bbP^1)^4/B \to (\bbP^1)^4/\PGL_2$ 
over the generic point $\Spec(K_4) \to (\bbP^1)^4/\PGL_2$
is the quadric in $\bbP_{K_4}^3$ given by
\[(x_4 + x_1)(x_3 + x_2) = a(x_4 + x_2)(x_3 + x_1). \]
(Note that in characteristic $2$, $x_i - x_j$ is the same as $x_i + x_j$.)
In $w, y, z$-coordinates, this equation can be rewritten as $z(w + z) = a(w+y)y$, or equivalently, as 
\[ ay^2 + ayw + z^2 + zw = 0. \]
Setting $u = y/w$ , $t = z/w$, we see that $F_4 = k(a,u,t)$, where 
\begin{equation} \label{e.affine-conic2} au^2 + au + t^2 + t = 0.
\end{equation}
The action of $\sigma$ is given by $\sigma(w) = w$, $\sigma(y) = w + y$, $\sigma(z) = w + y + z$. Thus 
\[\sigma^2(w) = w,\; \; \; \sigma^2(y) = y, \; \; \; \sigma^2(z) = w + z \]
Examining the diagram of field extensions~\eqref{fdia} and arguing as in the proof of Theorem~\ref{thm.main}(b) above, we conclude that 
$F_4^{\langle \sigma^2 \rangle} = k(a, u)$, where $a$ and $u$ are algebraically independent over $k$. Once again,
the group $S/\langle \sigma^2 \rangle \simeq \bbZ/ 2 \bbZ$ acts faithfully on $F_4^{\langle \sigma^2 \rangle} = k(a,u)$ by $\sigma \colon a \mapsto 1 - a = a + 1$
and $\sigma \colon u \mapsto u + 1$. By Lemma~\ref{lem.no-name}(b), $F_4^S = k(a,u)^{\langle \sigma \rangle}$ is rational over $K_4^S = k(a)^{\langle \sigma \rangle}$, as desired.
\end{proof}

\section*{Acknowledgment} This paper is based on a summer research project conducted by the first author at the University of British Columbia\footnote{Due 
to the travel restrictions associated with the Cover-19 pandemic, the work was conducted virtually.}. We are grateful to the anonymous referee for contributing Remark~\ref{rem.referee} and for other constructive comments.

\end{document}